\documentclass[a4paper,12pt]{amsart}

\usepackage{float}
\usepackage{euscript,eufrak,verbatim}
\usepackage{graphicx}
\usepackage[usenames]{color}
\usepackage[colorlinks,linkcolor=red,anchorcolor=blue,citecolor=blue]{hyperref}
\usepackage{amsmath}
\usepackage{amsthm}
\usepackage[all]{xy}
\usepackage{graphicx} 
\usepackage{amssymb}

\newtheorem{thm}{Theorem}[section]
\newtheorem{prop}[thm]{Proposition}

\newtheorem{lem}[thm]{Lemma}
\newtheorem{cor}[thm]{Corollary}

\newtheorem{problem}[thm]{Problem}

\def\N{\mathbb{N}}
\def\Z{\mathbb{Z}}
\def\N{\mathbb{N}}
\def\R{\mathbb{R}}

\def\XX{\mathcal{X}}
\def\YY{\mathcal{Y}}
\def\ZZ{\mathcal{Z}}

\def\SS{\mathcal{S}}

\def\SS{\mathcal{S}}

\def\coindex{\text{\rm coind}}
\def\ind{\text{\rm ind}}
\def\coindexPER{\text{\rm coind}^{\rm Per}}

\makeatletter 
\@addtoreset{equation}{section}
\numberwithin{equation}{section}

\title{Divergent coindex sequence for dynamical systems}

\author{Ruxi Shi
}
\address
{Institute of Mathematics, Polish Academy of Sciences, ul. \'Sniadeckich 8, 00-656 Warszawa, Poland}
\email{rshi@impan.pl}
\author{Masaki Tsukamoto
}
\address
{Department of Mathematics, Kyushu University, Moto-oka 744, Nishi-ku, Fukuoka 819-0395, Japan}
\email{tsukamoto@math.kyushu-u.ac.jp}
\begin{document}

\subjclass[2020]{37B02, 55M35}

\keywords{$\mathbb{Z}_p$-space, $\mathbb{Z}_p$-index, $\mathbb{Z}_p$-coindex, dynamical system, 
periodic point, marker property}

\thanks{M.T. was supported by JSPS KAKENHI 18K03275.}

	\maketitle

\begin{abstract}
When a finite group freely acts on a topological space, we can define its index and coindex.
They roughly measure the size of the given action.
We explore the interaction between this index theory and topological dynamics.
Given a fixed-point free dynamical system, the set of $p$-periodic points 
admits a natural free action of $\mathbb{Z}/p\mathbb{Z}$ for each prime number $p$.
We are interested in the growth of its index and coindex as $p\to \infty$.
Our main result shows that there exists a fixed-point free dynamical system having the divergent coindex sequence.
This solves a problem posed by \cite{tsukamoto2020markerproperty}.
\end{abstract}

\section{Introduction}

\subsection{Background on $\mathbb{Z}_p$-index}  

Let $p$ be a prime number.
When the finite group $\mathbb{Z}_p := \mathbb{Z}/p\mathbb{Z}$ freely acts on a topological space, we can define 
its index. The $\mathbb{Z}_p$-index roughly measures the size of the given $\mathbb{Z}_p$-space.
It has several astonishing applications to combinatorics \cite{matouvsek2003using}.

Tsutaya, Yoshinaga and the second-named author \cite{tsukamoto2020markerproperty} 
found an application of the $\mathbb{Z}_p$-index theory to \textit{topological dynamics}.
(One of their motivations is to solve a problem about the \textit{marker property} of dynamical systems.
This will be briefly explained in \S \ref{sec:marker property}.)
The purpose of this paper is to continue this investigation.
In particular we solve a problem posed by \cite{tsukamoto2020markerproperty}.

First we prepare terminologies of $\mathbb{Z}_p$-index, following the book of 
Matou$\mathrm{\check{s}}$ek \cite{matouvsek2003using}.
A pair $(X, T)$ is called a \textbf{$\mathbb{Z}_p$-space} if $X$ is a topological space and $T:X\to X$
is a homeomorphism with $T^p = \mathrm{id}$.
It is said to be \textbf{free} if $T^a x\neq x$ for all $1\leq a\leq p-1$ and $x\in X$.
Since $p$ is a prime number, this condition is equivalent to the condition that $Tx\neq x$ for all $x\in X$.

Let $n\geq 0$ be an integer.
A free $\mathbb{Z}_p$-space $(X, T)$ is called an \textbf{$E_n \mathbb{Z}_p$-space} if it satisfies:
\begin{itemize}
  \item $X$ is an $n$-dimensional finite simplicial complex and $T$ is a simplicial map 
 (i.e. sending each simplex to simplex affinely).
  \item $X$ is $(n-1)$-connected i.e., $\pi_k(X) = 0$ for all $0\leq k \leq n-1$.
\end{itemize}
For example, $\mathbb{Z}_p$ itself (with the natural $\mathbb{Z}_p$-action) is an $E_0 \mathbb{Z}_p$-space.
(We consider that $\mathbb{Z}_p$ is $(-1)$-connected.)
The join\footnote{Recall that for two topological spaces $X$ and $Y$, the join $X*Y$ is defined by 
$$X*Y:=[0,1]\times X\times Y/\sim$$ where the equivalence relation $\sim$ is given by
$$
(0, x, y)\sim (0,x, y')~\text{and}~(1, x, y)\sim (1,x', y),
$$
for any $x,x'\in X$ and any $y,y'\in Y$.
The equivalence class of $(t, x, y)$ is denoted by $(1-t)x\oplus ty$.
Given maps $T:X\to X$ and $S:Y\to Y$, we defined the map $T*S: X*Y\to X*Y$ by 
$$T*S\left((1-t)x\oplus ty\right) = (1-t)Tx \oplus t Sy. $$}  of the $(n+1)$ copies of $\mathbb{Z}_p$
\[  \left(\mathbb{Z}_p\right)^{*(n+1)} = \underbrace{\mathbb{Z}_p* \dots *\mathbb{Z}_p}_{\text{$(n+1)$ times}} \]
is an $E_n\mathbb{Z}_p$-space. Here $\mathbb{Z}_p$ acts on each component of $\left(\mathbb{Z}_p\right)^{*(n+1)}$
simultaneously.

An $E_n\mathbb{Z}_p$-space is not unique. 
But they are essentially unique for our purpose here \cite[Lemma 6.2.2]{matouvsek2003using}:
If $(X, T)$ and $(Y, S)$ are both $E_n \mathbb{Z}_p$-spaces then 
there are equivariant continuous maps $f:X\to Y$ and $g:Y\to X$.
 
Let $(X, T)$ be a free $\mathbb{Z}_p$-space. We define its \textbf{index} and \textbf{coindex} by 
\begin{equation*}
   \begin{split}
  \ind_p (X,T)  &:=\min\{n\ge 0: \exists~\text{an equivariant continuous}: X\to E_n\Z_p   \}, \\
   \coindex_p (X,T)   & :=\max\{n\ge 0: \exists~\text{an equivariant continuous}: E_n\Z_p \to X  \}.
   \end{split}
\end{equation*}
We set $\ind_p (X,T)=\infty$ if there is no equivariant continuous map from $X$ to $E_n\Z_p$ for any $n\ge 0$. We  use the convention that $\ind_p (X,T)=\coindex_p(X,T)=-1$ for $X=\emptyset$. 
We sometime abbreviate $\ind_p (X,T)$ (resp. $\coindex_p(X,T)$) as $\ind_pX$ (resp. $\coindex_pX$).

It is known that if there exists an equivariant continuous map from $E_m\Z_p$ to $E_n\Z_p$ then $m\le n$ (see \cite[Theorem 6.2.5]{matouvsek2003using}). 
From this,
$$
\ind_p (X,T)\ge \coindex_p(X,T).
$$
Moreover, 
$\ind_p E_n\mathbb{Z}_p = \coindex_p E_n \mathbb{Z}_p = n$.

\subsection{Background on dynamical systems}

A pair $(X, T)$ is called a \textbf{(topological) dynamical system} if $X$ is a compact metrizable space and 
$T:X\to X$ is a homeomorphism.
(Notice that here we assume the compactness of $X$. This is essential for our result.)

Let $n\geq 1$ and $(X, T)$ a dynamical system.
We define $P_n(X, T)$ as the set of $n$-periodic points of $(X, T)$:
$$
P_n(X, T) := \{x\in X: \, T^n x= x\}.
$$
We often abbreviate this as $P_n(X)$.

A dynamical system $(X, T)$ is said to be \textbf{fixed-point free} it has no fixed point, i.e. 
$P_1(X, T) = \emptyset$.
It is said to be \textbf{aperiodic} (or \textbf{free}) if it has no periodic point, i.e. $P_n(X, T) = \emptyset$
for all $n\geq 1$.  

Let $(X, T)$ be a dynamical system. For each prime number $p$, the pair 
\[  \left(P_p(X, T), T\right)  \]
is a $\mathbb{Z}_p$-space.
If $(X, T)$ is a fixed-point free dynamical system then 
$\left(P_p(X, T), T\right)$ becomes a free $\mathbb{Z}_p$-space.
The paper \cite{tsukamoto2020markerproperty} investigated its index and proved 

\begin{thm}[\cite{tsukamoto2020markerproperty}, Theorem 1.2] \label{theorem: linear growth}
Let $(X, T)$ be a fixed-point free dynamical system.
The sequence 
\[  \ind_p P_p(X), \quad (p=2,3,5,7, 11, \dots) \]
has at most linear growth in $p$. Namely there exists a positive number $C$ satisfying 
\[  \ind_p P_p(X) < C\cdot p \]
for all prime numbers $p$.
\end{thm}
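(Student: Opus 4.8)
The plan is to build, for every prime $p$, an explicit equivariant continuous map from $P_p(X)$ into a low-dimensional free $\mathbb{Z}_p$-simplicial complex, and then to invoke the elementary fact that the $\mathbb{Z}_p$-index of such a complex is at most its dimension. Since $T$ has no fixed point and $X$ is compact Hausdorff, every $x\in X$ has an open neighbourhood $U_x$ with $U_x\cap TU_x=\emptyset$ (separate $x$ from $Tx$ by disjoint open sets $A\ni x$, $B\ni Tx$ and take $U_x=A\cap T^{-1}B$); by compactness finitely many of them, say $U_1,\dots,U_N$, cover $X$. Fix a partition of unity $\phi_1,\dots,\phi_N$ subordinate to $\{U_1,\dots,U_N\}$. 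The number $N$ depends only on $(X,T)$, and it will serve as the constant $C$.

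For the target, let $C_p$ be the graph on the vertex set $\mathbb{Z}_p$ with an edge joining $t$ and $t+1$ for each $t\in\mathbb{Z}_p$, and let $\mathrm{Ind}(C_p)$ be its independence complex (the simplicial complex whose simplices are the independent sets of $C_p$); it has dimension $\lfloor p/2\rfloor-1$. The rotation $t\mapsto t+1$ makes $\mathrm{Ind}(C_p)$ a $\mathbb{Z}_p$-simplicial complex, and the induced action on its geometric realization is free: a nonempty simplex is an independent set $S$ with $1\le |S|\le\lfloor p/2\rfloor<p$, so, $p$ being prime, no nontrivial rotation preserves $S$ setwise. Consequently the $N$-fold join $K_p:=\mathrm{Ind}(C_p)^{*N}$, equipped with the diagonal $\mathbb{Z}_p$-action, is again a free $\mathbb{Z}_p$-simplicial complex, of dimension $N(\lfloor p/2\rfloor-1)+(N-1)=N\lfloor p/2\rfloor-1$.

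Next I would define the map. For $x\in P_p(X)$ and $1\le i\le N$, let $v_i(x)=(\phi_i(x),\phi_i(Tx),\dots,\phi_i(T^{p-1}x))$, whose nonnegative coordinates are naturally indexed by $\mathbb{Z}_p$ because $T^px=x$. The relation $U_i\cap TU_i=\emptyset$ forces $\phi_i(T^tx)\,\phi_i(T^{t+1}x)=0$ for every $t$, so the support of $v_i(x)$ is an independent set of $C_p$; setting $s_i(x)=\sum_{t\in\mathbb{Z}_p}\phi_i(T^tx)$ and observing $\sum_{i=1}^N s_i(x)=p$, the formula
\[ F(x)=\bigoplus_{i=1}^{N}\frac{s_i(x)}{p}\cdot\frac{v_i(x)}{s_i(x)} \]
defines a point of $K_p$ (a summand with $s_i(x)=0$ has weight $0$), and $F$ is continuous by the standard gluing argument for partition-of-unity maps into joins. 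Since $T$ cyclically shifts each $v_i(x)$ and leaves each $s_i(x)$ unchanged, $F\colon P_p(X)\to K_p$ is equivariant for the diagonal rotation action. Finally, a free $\mathbb{Z}_p$-simplicial complex $K$ of dimension $m$ admits an equivariant continuous map to $E_m\mathbb{Z}_p$, constructed skeleton by skeleton with all obstructions vanishing since $E_m\mathbb{Z}_p$ is $(m-1)$-connected; hence $\ind_p K\le\dim K$, and composing with $F$ gives
\[ \ind_p P_p(X)\le\ind_p K_p\le N\lfloor p/2\rfloor-1<Np, \]
which is the assertion with $C=N$.

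The principal difficulty is not an isolated hard estimate but the equivariant bookkeeping: verifying that the diagonal rotation on $\mathrm{Ind}(C_p)^{*N}$ is free, checking the continuity of $F$ along the locus where some $s_i$ vanishes, and carrying out the skeleton-by-skeleton construction of the equivariant map from a free $\mathbb{Z}_p$-complex of dimension $m$ to $E_m\mathbb{Z}_p$. Once these routine points are settled, the linear bound drops out at once.
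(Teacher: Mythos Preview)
The present paper does not contain a proof of this theorem; it is quoted verbatim from \cite{tsukamoto2020markerproperty} as background for the main result, so there is no in-paper argument to compare against. That said, your proposal is correct and is essentially the argument given in the cited reference: cover $X$ by finitely many open sets $U_1,\dots,U_N$ with $U_i\cap TU_i=\emptyset$, use a subordinate partition of unity to send $P_p(X)$ equivariantly into the $N$-fold join of the independence complex of the $p$-cycle, and bound the index by the dimension of that free $\mathbb{Z}_p$-complex. Your verification that the diagonal rotation on $\mathrm{Ind}(C_p)^{*N}$ is free (because a nonempty independent set has fewer than $p$ elements and $p$ is prime), the dimension count $N\lfloor p/2\rfloor-1$, and the appeal to the skeleton-by-skeleton extension into $E_m\mathbb{Z}_p$ are all in order; the continuity of $F$ at points where some $s_i$ vanishes is indeed the routine nerve-map argument you indicate.
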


So the sequence $\ind_p P_p(X)$ $(p=2,3,5,\dots)$ cannot be an arbitrary sequence.
It has a nontrivial restriction.
But, \textit{is this restriction optimal?
Is there a fixed point free dynamical system $(X, T)$ such that 
\[ \ind_p P_p(X) >C\cdot p \]
for some positive number $C$ and all sufficiently large prime numbers $p$?}
This is a difficult question because (at least for our current technology) it is hard to estimate 
$\ind_p P_p(X)$ from below.

Indeed even the following simpler question has been open.

\begin{problem}[\cite{tsukamoto2020markerproperty}, Problem 7.2]\label{prob:1}
Is there a fixed-point free dynamical system $(X, T)$ such that the sequence
$$\ind_p P_p(X), \quad (p = 2, 3, 5, 7, 11, \cdots)$$
is unbounded?
\end{problem}

The main purpose of this paper is to solve this problem affirmatively.

\subsection{Main result}

Let $\SS=\R/2\Z$. Let $\rho$ be a $\SS$-invariant metric on $\SS$ defined by
$$
\rho(x, y)=\min_{n\in \Z} |x-y-2n|.
$$
Let $\sigma$ be the (left)-shift on $\SS^\Z$.
Define a subsystem of $(\SS^\Z, \sigma)$ by 
$$
\ZZ:=\left\{(x_n)_{n\in \Z} \in \SS^\Z :
 \forall n\in \Z, ~\text{either}~\rho(x_{n-1}, x_{n})\ge \frac{1}{2} ~\text{or}~ \rho(x_{n}, x_{n+1})\ge \frac{1}{2} \right\}.
$$
Obviously,  the dynamical system $(\ZZ, \sigma)$ has no fixed points and $P_p(\ZZ, \sigma)\not=\emptyset$ for all prime numbers $p$. 

Now we state our main result.
\begin{thm}\label{main thm}
We have $$\lim_{p\to \infty} \coindex_p\, P_p(\ZZ, \sigma) =\infty,$$
where $p$ runs over prime numbers.
\end{thm}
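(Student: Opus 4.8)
The plan is to construct, for each prime $p$, an explicit equivariant continuous map from a large $E_n\Z_p$-space into $P_p(\ZZ,\sigma)$, with $n=n(p)\to\infty$. The natural model of $E_n\Z_p$ to use is the join $(\Z_p)^{*(n+1)}$, whose points are formal convex combinations $t_0 e_{a_0}\oplus t_1 e_{a_1}\oplus\cdots\oplus t_n e_{a_n}$ with $a_i\in\Z_p$ and $\sum t_i=1$, and on which the generator of $\Z_p$ acts by $a_i\mapsto a_i+1$ simultaneously in every coordinate. A $p$-periodic point of $(\ZZ,\sigma)$ is determined by a single block $(y_0,y_1,\ldots,y_{p-1})\in\SS^p$ (extended periodically), and $\sigma$ acts on such blocks by cyclic rotation of the indices; so I must send a join point to a periodic $\SS$-sequence in a way that intertwines the coordinate-shift $a_i\mapsto a_i+1$ on $(\Z_p)^{*(n+1)}$ with the rotation $(y_j)\mapsto(y_{j+1})$ on blocks. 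The key point is that the constraint defining $\ZZ$ only requires that \emph{at least one} of every two consecutive pairs has $\rho$-distance $\ge 1/2$; this is a soft, open-ended condition, which is exactly what makes it possible to absorb the whole join.

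Concretely, I would fix a base point $q\in\SS$ with $\rho(q, q+1)= 1$ (e.g. $q=0$ and the value $1\in\SS$), think of $\SS$ as containing a ``far apart'' pair $\{0,1\}$, and design a block that alternates between a ``spike region'' and ``plateau region'': on even coordinates put a value near $0$, and use the weight $t_i$ and label $a_i$ data of the join to decide, in the odd slots, how large a deviation to insert and where. The simplest version: take $n+1$ (or a constant multiple) to divide $p$ appropriately — or rather choose $n$ to grow like $\lfloor p/N\rfloor$ for a fixed $N$ — partition the $p$ cyclic coordinates into $n+1$ consecutive arcs, and let the $i$-th arc of the block encode $(t_i,a_i)$ by placing a bump whose position within the arc is governed by $a_i\in\Z_p$ and whose height (interpolating between $0$ and a value that forces a jump of size $\ge 1/2$) is governed by $t_i$; when $t_i=0$ the arc is flat at $0$. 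One checks that the defining inequality of $\ZZ$ holds because within each arc the constructed block either is locally flat at a safe spot that is adjacent to a genuine jump, or is itself making a jump of size $\ge 1/2$. Equivariance is built in: incrementing every $a_i$ by $1$ must correspond to rotating the whole block by one step, which forces the arc-encoding to be ``translation-covariant'', and this is arranged by letting the arcs themselves be the $\Z_p$-orbit of a fundamental arc — i.e. the decomposition of $\{0,\ldots,p-1\}$ must be chosen so that the $\Z_p$-rotation permutes arcs cyclically, which is why I take the number of arcs to be a divisor-like quantity and use that $p$ is prime only to know $\Z_p$ acts freely.

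The main obstacle is getting the equivariance and the $\ZZ$-constraint to hold \emph{simultaneously and continuously}, including at the degenerate faces of the join where some $t_i$ vanish: the map must stay continuous as a bump shrinks to nothing, and in that limit the corresponding arc must still be a legitimate piece of a $\ZZ$-block (so its flat value $0$ must sit next to a jump coming from a \emph{neighbouring} arc, which requires the ``spike/plateau'' alternation to be robust under arbitrary patterns of which $t_i$ are zero). Handling this cleanly probably means separating two interleaved families of coordinates — a fixed ``always jumps'' scaffold guaranteeing the constraint unconditionally, and a ``payload'' family carrying the join data with amplitudes in $[0,1/2)$ so the payload never destroys the constraint — and then verifying the join-face identifications $(0,x,y)\sim(0,x,y')$, $(1,x,y)\sim(1,x',y)$ map to the same block. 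Once the map $\Phi_p\colon (\Z_p)^{*(n(p)+1)}\to P_p(\ZZ,\sigma)$ is shown to be equivariant, continuous, and to land in $P_p$, Theorem~\ref{main thm} follows immediately: $\coindex_p P_p(\ZZ,\sigma)\ge n(p)\to\infty$ by definition of coindex, since $(\Z_p)^{*(n+1)}$ is an $E_n\Z_p$-space. I would also double-check that $\Phi_p$ really needs no lower bound technology at all — only a concrete upstairs map — which is the whole reason the coindex (rather than the index) is the tractable quantity here.
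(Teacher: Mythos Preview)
Your proposal takes a completely different route from the paper. The paper's argument is indirect and explicitly noneffective: for each $K\ge 0$ it forms the inverse limit of the systems $(\Sigma_m^{*(K+1)},\sigma^{*(K+1)})$ with $\Sigma_m\subset\Z_3^{\Z}$, observes that this inverse limit is aperiodic and at most $K$-dimensional, invokes Gutman's theorem to obtain the marker property, and then uses Lemma~\ref{lem:to Y} together with the inverse-limit/right-inverse machinery of Corollary~\ref{cor:properties of maps} and Proposition~\ref{prop:inverse limit} to produce an equivariant continuous map $\Sigma_M^{*(K+1)}\to\ZZ$ for some $M=M(K)$. Lemma~\ref{lem:p>m!} then gives $\coindexPER_p(\ZZ,\sigma)\ge K$ for all primes $p>M!$. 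No map from any $E_n\Z_p$ into $P_p(\ZZ)$ is ever written down, and the authors state outright that the topology of $P_p(\ZZ,\sigma)$ is ``very hard to directly study''.

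Your direct construction has a genuine gap at exactly this point. The $\Z_p$-action on $(\Z_p)^{*(n+1)}$ moves the \emph{labels} $a_i\mapsto a_i+1$ while fixing the join index $i$; the $\Z_p$-action on $P_p(\ZZ)\subset\SS^{\Z_p}$ shifts the \emph{position} $j$. Your plan to ``partition the $p$ cyclic coordinates into $n+1$ consecutive arcs'' so that ``the $\Z_p$-rotation permutes arcs cyclically'' cannot work for $1<n+1<p$: since $p$ is prime, a decomposition of $\Z_p$ into congruent blocks permuted by the shift-by-$1$ exists only with $1$ or $p$ blocks, and in any case the rule ``arc $i$ carries $(t_i,a_i)$'' is not equivariant, because incrementing all labels does nothing to the join index $i$. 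The scaffold/payload variant runs into the same wall: a scaffold that guarantees the $\ZZ$-inequality unconditionally must have jumps of $\rho$-size $1$, but a $p$-periodic block with \emph{all} consecutive jumps equal to $1$ does not exist for odd $p$ (it would force $y_0=y_0+p=y_0+1$ in $\SS$); so the scaffold must itself depend on the join point, must translate rigidly under $a_\bullet\mapsto a_\bullet+1$, and must simultaneously be independent of $a_i$ on the face $t_i=0$. Reconciling these demands for $n\ge 2$ is precisely the hard problem, and nothing in your sketch addresses it. As it stands the proposal is an outline of what one would like to do, not a proof; if it could be completed it would give an effective growth rate for $\coindexPER_p(\ZZ,\sigma)$, which the paper leaves as an open problem.
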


Since we know
\[  \coindex_p P_p\left(\ZZ, \sigma\right) \leq \ind_p P_p\left(\ZZ, \sigma\right), \]
we also have 
$$ \lim_{p\to \infty} \ind_p P_p\left(\ZZ, \sigma\right) = \infty. $$
So this solves Problem \ref{prob:1} affirmatively.

We would like to remark that our proof of Theorem \ref{main thm} is \textit{noneffective}.
We cannot figure out the actual growth rate of $\coindex_p P_p(\ZZ, \sigma)$ from our proof.
This remains to be a task for a future study.
The main difficulty is that (at least for the authors) it is very hard to directly study the topology of 
$P_p(\ZZ,\sigma)$.
Our proof is indirect and uses the \textit{marker property} (see \S \ref{sec:marker property}).

Using Theorem \ref{main thm}, we can also construct some other
fixed-point free dynamical systems 
having divergent coindex sequence.
Let $\rho_N$ be a $\SS^N$-invariant metric on $\SS^N$ defined by
$$
\rho_N\left( (x_i)_{i=1}^N, (x_i)_{i=1}^N \right)=\max_{1\le i\le N} \rho(x_i, y_i).
$$
For a positive integer $N$ and $\delta>0$, we define
$$
\XX(\SS^N, 1, \delta):=\left\{(x_n)_{n\in \Z}\in (\SS^N)^\Z: \rho_N(x_n, x_{n+1})\ge \delta, ~\forall n\in \Z \right\},
$$
This notation might look a bit strange. Its meaning will become clearer in \S \ref{section: inverse limit}.
The system $\XX(\SS^N, 1, \delta)$ has no fixed point.

\begin{lem}\label{lem:embedding Z}
We have the following equivariant embeddings:
$$
\XX(\SS, 1, 1/2) \hookrightarrow (\ZZ, \sigma) \hookrightarrow \XX(\SS^2, 1, 1/2)\hookrightarrow \XX(\SS^N, 1, 1/2),
$$
for all integers $N\ge 2$.
\end{lem}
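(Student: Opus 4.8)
The plan is to exhibit each of the three arrows as an explicit equivariant continuous injection, and then invoke a general principle: every space occurring here is a compact metrizable subset of a product of copies of $\SS$ (or $\SS^N$) and $\sigma$ is a homeomorphism, so any equivariant continuous injection between two of them is automatically a homeomorphism onto its image, hence an embedding of dynamical systems. Thus the only content to check in each case is that the map really lands in the claimed target space.

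For the first arrow I would just use the inclusion. If $(x_n)_{n\in\Z}\in\XX(\SS,1,1/2)$, then $\rho(x_{n-1},x_n)\ge \frac{1}{2}$ for every $n$, so the defining condition of $\ZZ$ holds trivially; hence $\XX(\SS,1,1/2)$ is a $\sigma$-invariant subset of $\ZZ$ and the inclusion map is the desired embedding.

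For the middle arrow I would use the sliding window of length two. Define $\Phi\colon\ZZ\to(\SS^2)^\Z$ by $\Phi\big((x_n)_{n\in\Z}\big)=(y_n)_{n\in\Z}$ with $y_n=(x_n,x_{n+1})$. This map is continuous, $\sigma$-equivariant, and injective (the first coordinate of $y_n$ recovers $x_n$). The key observation is that for $(x_n)_n\in\ZZ$ the defining condition, applied at the index $n+1$, says precisely that $\rho(x_n,x_{n+1})\ge\frac{1}{2}$ or $\rho(x_{n+1},x_{n+2})\ge\frac{1}{2}$, that is,
$$\rho_2(y_n,y_{n+1})=\max\{\rho(x_n,x_{n+1}),\,\rho(x_{n+1},x_{n+2})\}\ge\tfrac{1}{2},$$
so $\Phi(\ZZ)\subseteq\XX(\SS^2,1,1/2)$.

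For the last arrow, fix a point $c\in\SS$ and define $\Psi\colon\XX(\SS^2,1,1/2)\to(\SS^N)^\Z$ by appending $N-2$ constant coordinates equal to $c$; that is, the $n$-th coordinate of $\Psi\big((x_n)_n\big)$ is obtained from $x_n\in\SS^2$ by adjoining $N-2$ copies of $c$. This is again continuous, equivariant and injective, and since the adjoined coordinates never vary, the $\rho_N$-distance between two consecutive terms of $\Psi\big((x_n)_n\big)$ equals the $\rho_2$-distance between the corresponding terms of $(x_n)_n$, which is $\ge\frac{1}{2}$; hence $\Psi$ lands in $\XX(\SS^N,1,1/2)$. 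Composing the three maps also gives $(\ZZ,\sigma)\hookrightarrow\XX(\SS^N,1,1/2)$ for every $N\ge2$. The only step that is not pure bookkeeping is the middle one, where one has to notice that passing to overlapping blocks of length two upgrades the weaker condition defining $\ZZ$ (``at least one of the two adjacent gaps has length $\ge\frac{1}{2}$'') to the stronger condition defining $\XX(\SS^2,1,1/2)$ (``every gap has length $\ge\frac{1}{2}$''); I do not expect any genuine obstacle here.
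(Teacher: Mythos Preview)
Your proposal is correct and matches the paper's own proof essentially verbatim: the paper declares the first and third arrows ``canonical'' and defines the middle one by exactly your sliding-window map $(x_k)_{k\in\Z}\mapsto (x_k,x_{k+1})_{k\in\Z}$. You have simply supplied the routine details the paper omits.
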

\begin{proof}
The embeddings $\XX(\SS, 1, 1/2) \hookrightarrow (\ZZ, \sigma)$ 
and $\XX(\SS^2, 1, 1/2)\hookrightarrow \XX(\SS^N, 1, 1/2)$ are canonical for $N\ge 2$. 
Define $f: (\ZZ, \sigma) \to \XX(\SS^2, 1, 1/2)$ by $(x_k)_{k\in \Z} \mapsto (x_k, x_{k+1})_{k\in \Z}$. 
It is easy to check that $f$ is an equivariant embedding.
\end{proof}
Combining Lemma \ref{lem:embedding Z} and Theorem \ref{main thm}, we get that for all integers $N\ge 2$
and $0<\delta<1/2$
$$\lim_{p\to \infty} \coindex_p{P_p\left(\XX(\SS^N, 1, \delta)\right)}=\infty,$$
where $p$ runs over prime numbers.

\section{Preliminaries}

\subsection{Properties of $\Z_p$-coindex}

Let $(X, T)$ be a dynamical system.
Following \cite{shi2021marker}, for simplifying notations,
we define the \textit{periodic coindex} of $(X,T)$ as 
$$
\coindex_p^{\rm Per}(X,T)=\coindex_p(P_p(X,T), T),
$$
for prime numbers $p$. The following lemma is essentially due to \cite[Proposition 3.1 ]{tsukamoto2020markerproperty}. See also the proof in \cite[Corollary 3.3]{shi2021marker}.
\begin{lem}\label{lem:basic property}
Let $(X, T)$ and $(Y, S)$ be fixed-point free dynamical systems. Let $p$ be a prime number. Then the following properties hold.
\begin{itemize}
    \item [(1)] If there is an equivariant continuous map $f: X\to Y$ then $\coindexPER_p(X,T)\le \coindexPER_p(Y, S)$.
    \item [(2)] The system $(X*Y, T* S)$ has no fixed points and $\coindexPER_p(X*Y, T* S)\ge \coindexPER_p(X,T)+\coindexPER_p(Y, S)+1$.
\end{itemize}
\end{lem}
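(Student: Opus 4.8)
The plan is to reduce both assertions to two functorial features of the $\Z_p$-coindex: it is monotone along equivariant continuous maps, and it behaves additively (with an extra $+1$) under joins, because the join of an $E_m\Z_p$-space with an $E_n\Z_p$-space is an $E_{m+n+1}\Z_p$-space. For part (1) I would first observe that an equivariant continuous $f\colon X\to Y$ restricts to a $\Z_p$-equivariant map $P_p(X,T)\to P_p(Y,S)$ (from $T^px=x$ one gets $S^pf(x)=f(T^px)=f(x)$). Then, unless $\coindexPER_p(X,T)=-1$ (in which case the inequality is vacuous), I would set $m:=\coindexPER_p(X,T)\ge 0$, take a witnessing equivariant continuous map $E_m\Z_p\to P_p(X,T)$, and postcompose with $f$ to get an equivariant continuous map $E_m\Z_p\to P_p(Y,S)$; hence $\coindexPER_p(Y,S)\ge m$.

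For part (2), I would first check that $(X*Y,T*S)$ has no fixed point by looking at a point $(1-t)x\oplus ty$, whose image under $T*S$ is $(1-t)Tx\oplus tSy$: for $0<t<1$ the join parameters are not identified, so a fixed point forces $Tx=x$, while for $t=0$ (resp.\ $t=1$) the point lies on the embedded copy of $X$ (resp.\ $Y$) inside $X*Y$, on which $T*S$ acts as $T$ (resp.\ $S$), again forcing $Tx=x$ (resp.\ $Sy=y$) --- each possibility contradicting the fixed-point freeness of $(X,T)$ or of $(Y,S)$.

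For the coindex estimate in part (2), write $m:=\coindexPER_p(X,T)$ and $n:=\coindexPER_p(Y,S)$. If $m=-1$ or $n=-1$ I would simply note that the corresponding end of the join, $Y\hookrightarrow X*Y$ or $X\hookrightarrow X*Y$, is an equivariant embedding and apply part (1), which already yields the desired bound since then $m+1=0$ or $n+1=0$. If $m,n\ge 0$, I would fix witnessing equivariant continuous maps $\phi\colon E_m\Z_p\to P_p(X,T)$ and $\psi\colon E_n\Z_p\to P_p(Y,S)$, form the join $\phi*\psi\colon E_m\Z_p*E_n\Z_p\to P_p(X,T)*P_p(Y,S)$ (which is again equivariant and continuous), identify $E_m\Z_p*E_n\Z_p$ with an $E_{m+n+1}\Z_p$-space, and postcompose with the tautological map $P_p(X,T)*P_p(Y,S)\to P_p(X*Y,T*S)$, $(1-t)x\oplus ty\mapsto(1-t)x\oplus ty$. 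The latter is well defined because $(T*S)^p$ acts on the join as $T^p$ and $S^p$ on the two coordinates, so it fixes $(1-t)x\oplus ty$ whenever $T^px=x$ and $S^py=y$; it is clearly continuous and $\Z_p$-equivariant. Chaining these maps produces an equivariant continuous map $E_{m+n+1}\Z_p\to P_p(X*Y,T*S)$, giving $\coindexPER_p(X*Y,T*S)\ge m+n+1$.

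The steps that are not pure formalities are the three standard facts about joins that I invoke in the last step: the join of an $m$-dimensional and an $n$-dimensional finite simplicial complex is $(m+n+1)$-dimensional, the diagonal $\Z_p$-action on the join of two free simplicial $\Z_p$-complexes is free and simplicial, and --- the point I would be most careful about --- the join of an $(m-1)$-connected space with an $(n-1)$-connected space is $(m+n)$-connected. Once these are granted the argument is diagram-chasing, which is why the lemma is attributed to \cite{tsukamoto2020markerproperty} and \cite{shi2021marker}. (If one wishes to avoid any ambiguity about which $E_{m+n+1}\Z_p$-space is meant, one can just take the explicit model $E_k\Z_p=(\Z_p)^{*(k+1)}$, for which $E_m\Z_p*E_n\Z_p=E_{m+n+1}\Z_p$ literally; otherwise one invokes the essential uniqueness of $E_k\Z_p$-spaces recalled in the introduction.)
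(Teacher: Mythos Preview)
Your proof is correct and follows the standard approach; the paper itself gives no argument beyond citing \cite[Proposition 3.1]{tsukamoto2020markerproperty} and \cite[Corollary 3.3]{shi2021marker}, and what you have written is exactly the content of those references. Your observation that one may take the explicit model $E_k\Z_p=(\Z_p)^{*(k+1)}$, so that $E_m\Z_p*E_n\Z_p=E_{m+n+1}\Z_p$ holds literally, is the cleanest way to handle the join step and avoids any appeal to connectivity of joins.
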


\subsection{Marker property}\label{sec:marker property}

For a dynamical system $(X,T)$, it is said to satisfy the {\bf marker property} if for each positive integer $N$ there exists an open set $U\subset X$ satisfying that
$$
U\cap T^{-n}U=\emptyset~\text{for all}~0<n<N~\text{and,}~X=\bigcup_{n\in \Z} T^{n}U.
$$
For example, an extension of an aperiodic minimal system has
the marker property.
Gutman \cite[Theorem 6.1]{Gut15Jaworski} proved that 
every finite dimensional aperiodic dynamical system has the marker property.
Here a dynamical system $(X, T)$ is said to be finite dimensional if the topological dimension (a.k.a the Lebesgue covering 
dimension) of $X$ is finite.
The marker property has been intensively used in the context of mean dimension theory.

Probably the marker property seems to have nothing to do with the study of $\mathbb{Z}_p$-index.
But indeed it has.

We can easily see that if a dynamical system has the marker property then 
it is aperiodic.
It had been an open problem for several years whether the converse holds or not.
This problem was solved by \cite{tsukamoto2020markerproperty}.
They constructed an aperiodic dynamical system which does not have the marker property.
The main ingredient of their proof is the $\mathbb{Z}_p$-index theory\footnote{This was a main motivation for
\cite{tsukamoto2020markerproperty} to study the interaction between $\mathbb{Z}_p$-index theory and topological dynamics.}.
The first-named author \cite{shi2021marker} further developed the argument and proved that 
there exists a finite mean dimensional aperiodic dynamical system which does not have the marker property.

The proof of Theorem \ref{main thm} uses the method developed in \cite{shi2021marker}.

\section{Inverse limit of a family of dynamical systems} \label{section: inverse limit}
In this section, we follow \cite[Section 5]{shi2021marker} and write the results from $(\R/2\Z)^{\Z}$ to infinite products of a compact metrizable abelian group.

Let $G$ be a compact metrizable abelian group. Then there is a $G$-invariant metric $\rho$ on $G$ which is compatible with its topology (\cite{struble1974metrics}), i.e. $\rho(x+g,y+g)=\rho(x,y)$ for any $x,y,g\in G$.  Let $\sigma$ be the (left)-shift on $G^\Z$, i.e. $\sigma((x_k)_{k\in \Z} )=(x_{k+1})_{k\in \Z}$. For any positive integers $m$ and any number $\delta>0$, we define a subsystem $(\XX(G, m, \delta), \sigma)$ of $(G^\Z, \sigma)$ by
$$
\XX(G, m, \delta):=\{(x_n)_{n\in \Z}\in G^\Z: \rho(x_n, x_{n+m!})\ge \delta, ~\forall n\in \Z \},
$$
where $m!=m\cdot (m-1)\cdot \dots \cdot 2 \cdot 1$.
It is clear that $(\XX(G, m, \delta), \sigma)$ has no fixed points. We denote by $(X_m, T_m):=(\XX(G, m, \delta), \sigma)$ for convenience when $G$ is fixed. 

For $m>1$, we define an equivariant continuous map $\theta_{m,m-1}$ from $X_m$ to $G^\Z$ by
$$
(x_{k})_{k\in \Z} \mapsto \left(\sum_{i=0}^{m-1}x_{k+i (m-1)!} \right)_{k\in \Z}.
$$
A simple computation shows that
\begin{equation*}
\begin{split}
    &\rho\left(\sum_{i=0}^{m-1}x_{k+i\cdot (m-1)!} ,   \sum_{i=0}^{m-1}x_{(k+(m-1)!)+i\cdot (m-1)!} \right)\\
    =&\rho\left(\sum_{i=0}^{m-1}x_{k+i\cdot (m-1)!} ,   \sum_{i=1}^{m}x_{k+i\cdot (m-1)!}\right)\\
    =&\rho\left(x_{k} ,  x_{k+ m\cdot (m-1)!} \right)=\rho\left(x_{k} ,  x_{k+ m!} \right),~\forall k\in \Z.
\end{split}
\end{equation*}
Then we obtain that the image of $X_m$ under $\theta_{m,m-1}$ is contained in $X_{m-1}$. For $m>n$, we define $\theta_{m,n}=\theta_{m,m-1}\circ \theta_{m-1,m-2} \circ \dots \theta_{n+1,n}$ to be  an equivariant continuous map from $X_m$ to $X_n$. 


Fix $a=(a_k)_{k\in \Z}\in G^\Z.$ For $m\ge 2$, we define a map $\eta_{m-1,m}=\eta_{m-1,m}^a: X_{m-1} \to G^\Z$ by
$\eta_{m-1, m}((x_k)_{k\in \Z})=(y_k)_{k\in \Z}$ where 
\begin{equation*}
    y_k=
    \begin{cases}
    \bigbreak
    a_k~&\text{if}~0\le k\le (m-1)\cdot (m-1)!-1,\\
    \bigbreak
    x_{k-(m-1)\cdot (m-1)!}-\sum_{i=1}^{m-1}a_{k-i\cdot (m-1)!}~&\text{if}~(m-1)\cdot (m-1)!\le k\le m!-1\\
    \sum_{i=0}^{n-1} \left( x_{i\cdot m!+(m-1)!+j}-x_{i\cdot m!+j} \right) +y_j &\text{if}~k=n\cdot m!+j \\
    \bigbreak
   & ~\text{with}~n>0~\text{and}~0\le j\le m!-1,\\
    \sum_{i=n}^{-1} \left( x_{i\cdot m!+j}-x_{i\cdot m!+(m-1)!+j} \right) +y_j &\text{if}~k= n\cdot m!+j\\
    &~\text{with}~n<0~\text{and}~0\le j\le m!-1.
    \end{cases}
\end{equation*}
Obviously, the map $\eta_{m-1,m}$ is continuous. We remark that the map $\eta_{m-1,m}$ is not equivariant. We show several properties of $\eta_{m-1,m}$ in the following lemmas.
\begin{lem}\label{lem:image of eta}
For $m\ge 2$, $\eta_{m-1,m}(X_{m-1})\subset X_m$.
\end{lem}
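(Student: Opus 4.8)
The plan is to verify the defining inequality of $\XX(G,m,\delta)$ directly. Fix $(x_k)_{k\in\Z}\in X_{m-1}$ and write $(y_k)_{k\in\Z}=\eta_{m-1,m}\bigl((x_k)_{k\in\Z}\bigr)$. What must be shown is that $\rho(y_k,y_{k+m!})\ge\delta$ for every $k\in\Z$.

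The heart of the matter is the exact identity
\[
  y_{k+m!}-y_k \;=\; x_{k+(m-1)!}-x_k \qquad (k\in\Z).
\]
To prove it, write $k=n\cdot m!+j$ with $n\in\Z$ and $0\le j\le m!-1$, so that $k+m!=(n+1)m!+j$ has the same remainder $j$; then run through the cases in the definition of $\eta_{m-1,m}$ according to the value of $n$. If $n\ge1$, both $y_k$ and $y_{k+m!}$ are given by the third branch (the $n>0$ case), and subtracting the two telescoping sums leaves precisely $x_{nm!+(m-1)!+j}-x_{nm!+j}=x_{k+(m-1)!}-x_k$. If $n=0$, then $y_k=y_j$ while $y_{k+m!}$ is the third branch with a one-term sum, so the $y_j$'s cancel and we again obtain $x_{(m-1)!+j}-x_j$. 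If $n\le-1$, both terms come from the fourth branch (with $y_{k+m!}=y_j$ in the borderline case $n=-1$), and the same telescoping cancellation gives $x_{nm!+(m-1)!+j}-x_{nm!+j}=x_{k+(m-1)!}-x_k$. In every case the (unspecified) values $y_j$, $0\le j\le m!-1$, coming from the first two branches cancel out, so their explicit form is irrelevant.

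Granting the identity, the conclusion is immediate: since $\rho$ is $G$-invariant,
\[
  \rho(y_k,y_{k+m!})=\rho\bigl(x_{k+(m-1)!}-x_k,\,0\bigr)=\rho\bigl(x_k,\,x_{k+(m-1)!}\bigr)\ge\delta,
\]
where the last inequality holds because $(x_k)_{k\in\Z}\in X_{m-1}=\XX(G,m-1,\delta)$. Hence $(y_k)_{k\in\Z}\in X_m$, which is exactly $\eta_{m-1,m}(X_{m-1})\subset X_m$.

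The only real difficulty is bookkeeping: one has to check that the case split on $n$ is exhaustive and that the transitions from $n=-1$ to $n=0$ and from $n=0$ to $n=1$ — where the generic telescoping formulas meet the special branches valid on the block $0\le j\le m!-1$ — mesh correctly. It is useful here to record that $(m-1)\cdot(m-1)!+(m-1)!=m!$, so the first two branches of the definition exactly tile that block; and note that no constraint on the range of $j+(m-1)!$ is needed, since the identity involves only differences of entries of the bi-infinite sequence $(x_k)$.
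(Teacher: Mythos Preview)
Your proof is correct and follows essentially the same approach as the paper: write $k=n\cdot m!+j$ and verify by a case analysis on $n$ the telescoping identity $y_{k+m!}-y_k=x_{k+(m-1)!}-x_k$ (the paper equivalently computes $y_k-y_{k-m!}$, which amounts to shifting the case split by one). Your remark that the explicit values $y_j$ on the base block cancel and are therefore irrelevant is a nice clarifying observation not made explicit in the paper.
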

\begin{proof}
Let $x\in X_{m-1}$ and $y=\eta_{m-1,m}(x)$. Let $k=n\cdot m!+j$ with $n\in \Z$ and $0\le j\le m!-1$. We divide the proof in the following three cases according to the value of $n$.

\vspace{5pt}

\noindent Case 1. $n=1$. We have
$$
y_k-y_{k-m!}=x_{(m-1)!+j}-x_j+y_j-y_j=x_{(m-1)!+j}-x_{j}.
$$
Since $x\in X_{m-1}$, we have
$$
\rho(y_{k}, y_{k-m!})=\rho(x_{(m-1)!+j}, x_{j})\ge \delta.
$$
\\
Case 2. $n\ge 2$. A simple computation shows that
\begin{equation*}
    \begin{split}
        &y_{k}-y_{k-m!}\\
        =&\sum_{i=0}^{n-1} \left( x_{i\cdot m!+(m-1)!+j}-x_{i\cdot m!+j} \right) - \sum_{i=0}^{n-2} \left( x_{i\cdot m!+(m-1)!+j}-x_{i\cdot m!+j} \right)\\
        =&x_{(n-1) m!+(m-1)!+j}-x_{(n-1)\cdot m!+j}=x_{k-m!+(m-1)!}-x_{k-m!}.
    \end{split}
\end{equation*}
It follows that 
$$
\rho(y_{k}, y_{k-m!})=\rho(x_{k-m!+(m-1)!}, x_{k-m!})\ge \delta.
$$
\\
Case 3. $n\le 0$. Similarly to Case 1 and Case 2, we have
\begin{equation*}
    y_{k}-y_{k-m!}=x_{k-m!+(m-1)!}-x_{k-m!},
\end{equation*}
and consequently $\rho(y_{k}, y_{k-m!})\ge\delta.$

\vspace{5pt}

To sum up, we conclude that $y\in X_m$ and $\eta_{m-1,m}(X_{m-1})\subset X_m$.
\end{proof}

By Lemma \ref{lem:image of eta}, we see that $\eta_{m-1, m}$ is the map from $X_{m-1}$ to $X_m$. Moreover, we show in the following that $\eta_{m-1, m}$ is indeed a right inverse map of $\theta_{m, m-1}$.

\begin{lem}\label{lem:identity}
$\theta_{m,m-1}\circ\eta_{m-1,m}=\text{id}, \forall m\ge 2.$
\end{lem}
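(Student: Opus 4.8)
The plan is to establish the coordinatewise identity $\sum_{i=0}^{m-1} y_{k+i(m-1)!} = x_k$ for all $k\in\Z$, where $x=(x_k)_{k\in\Z}\in X_{m-1}$ is arbitrary and $y=(y_k)_{k\in\Z}=\eta_{m-1,m}(x)$; the left-hand side is by definition the $k$-th coordinate of $\theta_{m,m-1}(y)$. Throughout I abbreviate $q=(m-1)!$, so that $m!=mq$ and the block on which $y_k=a_k$ is $0\le k\le (m-1)q-1$.

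First I would verify the identity on the fundamental block $0\le k\le q-1$ by unwinding the definition of $\eta_{m-1,m}$. For $0\le i\le m-2$ the index $k+iq$ lies in $[0,(m-1)q-1]$, so $y_{k+iq}=a_{k+iq}$; for $i=m-1$ the index $k+(m-1)q$ lies in $[(m-1)q,m!-1]$, so the second line of the definition gives $y_{k+(m-1)q}=x_k-\sum_{l=0}^{m-2}a_{k+lq}$ after a reindexing of the sum. Adding up the $m$ terms, the $a$-contributions cancel and the sum equals $x_k$.

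Next I would use the quasi-periodicity of $y$ that is already contained in the proof of Lemma~\ref{lem:image of eta}, namely $y_l-y_{l-m!}=x_{l-m!+q}-x_{l-m!}$ for every $l\in\Z$ (this is exactly the computation of $y_k-y_{k-m!}$ carried out there in the three cases $n=1$, $n\ge 2$, $n\le 0$). A telescoping sum then yields, for every $k\in\Z$,
$$\theta_{m,m-1}(y)_{k+q}-\theta_{m,m-1}(y)_k=\sum_{i=0}^{m-1}\bigl(y_{k+(i+1)q}-y_{k+iq}\bigr)=y_{k+m!}-y_k=x_{k+q}-x_k,$$
the last step being the displayed relation applied with $l=k+m!$. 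Hence the function $k\mapsto\theta_{m,m-1}(y)_k-x_k$ is $q$-periodic, and since it vanishes on the full period $\{0,1,\dots,q-1\}$ by the first step, it vanishes identically. This is the assertion.

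This is essentially a bookkeeping argument and I do not anticipate a genuine obstacle; the one point requiring care is that $\eta_{m-1,m}$ is \emph{not} equivariant, so an arbitrary $k$ cannot be reduced to the fundamental domain merely by applying the shift. The quasi-periodicity relation borrowed from Lemma~\ref{lem:image of eta} is precisely the substitute for equivariance, reducing the whole statement to the finite check on $0\le k\le q-1$ together with the telescoping step.
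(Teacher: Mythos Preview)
Your proof is correct and is in fact cleaner than the paper's. Both arguments begin with the same direct check on the block $0\le k\le q-1$, but from there the routes diverge. The paper writes an arbitrary $k$ as $s\cdot m!+t\cdot q+j$ and carries out a long explicit computation (separately for $s\ge 0$ and $s<0$) expanding the definition of $y$ and telescoping term by term. You instead recycle the identity $y_l-y_{l-m!}=x_{l-m!+q}-x_{l-m!}$ already established in the proof of Lemma~\ref{lem:image of eta}, deduce that $k\mapsto\theta_{m,m-1}(y)_k-x_k$ is $q$-periodic, and conclude from the base case. This is shorter and makes the logical structure clearer: the ``hard'' computation was already done once in the previous lemma and need not be repeated.
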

\begin{proof}
Let $x\in X_{m-1}$ and $y=\eta_{m-1,m}(x)$. Then we have
$$
\theta_{m,m-1}\circ\eta_{m-1,m}(x)=\theta_{m,m-1}(y)=\left(\sum_{i=0}^{m-1}y_{k+i\cdot (m-1)!} \right)_{k\in \Z}.
$$

If $0\le k\le (m-1)!-1$, then 
\begin{equation*}
    \begin{split}
        &\sum_{i=0}^{m-1}y_{k+i\cdot (m-1)!}\\
        =&\sum_{i=0}^{m-2}a_{k+i\cdot (m-1)!}+ \left(x_{k}-\sum_{i=1}^{m-1}a_{k+(m-1-i)\cdot (m-1)!}\right)
        =x_{k}.
    \end{split}
\end{equation*}

If $k=s\cdot m!+t\cdot (m-1)!+j>(m-1)!$ for $s\ge 0$, $0\le t\le m-1$ and $0\le j\le (m-1)!-1$, then 
\begin{equation*}
    \begin{split}
      &\sum_{i=0}^{m-1}y_{k+i\cdot (m-1)!}\\
        =& \sum_{i=t}^{m-1}y_{s\cdot m!+i\cdot (m-1)!+j}+\sum_{i=0}^{t-1}y_{(s+1) m!+i \cdot (m-1)!+j}\\
        =& \sum_{i=t}^{m-1}\sum_{\ell=0}^{s-1} \left( x_{\ell \cdot m!+(i+1)\cdot (m-1)!+j}-x_{\ell\cdot m!+i\cdot (m-1)!+j} \right) \\
        &+\sum_{i=0}^{t-1}\sum_{\ell=0}^{s} \left( x_{\ell \cdot m!+(i+1)\cdot (m-1)!+j}-x_{\ell\cdot m!+i\cdot (m-1)!+j} \right) +\sum_{i=0}^{m-1}y_{i\cdot (m-1)!+j}\\
        =&\sum_{\ell=0}^{s-1}\sum_{i=0}^{m-1} \left( x_{\ell \cdot m!+(i+1)\cdot (m-1)!+j}-x_{\ell\cdot m!+i\cdot (m-1)!+j} \right) \\
        &+\sum_{i=0}^{t-1}\left( x_{s \cdot m!+(i+1)\cdot (m-1)!+j}-x_{s\cdot m!+i\cdot (m-1)!+j} \right)+ x_j\\
        =&\sum_{\ell=0}^{s-1} \left( x_{(\ell+1) \cdot m!+j}-x_{\ell\cdot m! +j} \right) 
        +\left( x_{s \cdot m!+t\cdot (m-1)!+j}-x_{s\cdot m!+j} \right)+ x_j\\
        =& x_{s\cdot m!+j}-x_j+x_{k}-x_{s\cdot m!+j} +x_j=x_k.
    \end{split}
\end{equation*}

If $k=s\cdot m!+t\cdot (m-1)!+j$ for $s<0$, $0\le t\le m-1$ and $0\le j\le (m-1)!-1$, then by the similar computation of the case where $s\ge 0$, we have $\sum_{i=0}^{m-1}y_{k+i\cdot (m-1)!}=x_k$. This completes the proof.
\end{proof}

\begin{cor}\label{cor:properties of maps}
Let $K$ be a non-negative integer. Then the following properties hold for $m>n$.
\begin{itemize}
    \item [(i)] The map 
$
\theta_{m,n}^{*(K+1)}: X_m^{*(K+1)} \to X_n^{*(K+1)}
$
is equivariant, continuous and surjective.
    \item [(ii)] The map  $\eta_{n,m}^{*(K+1)}: X_{n}^{*(K+1)} \to X_m^{*(K+1)}$ is equivariant and continuous.
    \item [(iii)] The map $\eta_{n,m}^{*(K+1)}$ is a continuous right-inverse of $\theta_{m,n}^{*(K+1)}$, i.e. $\theta_{m,n}^{*(K+1)}\circ \eta_{n,m}^{*(K+1)}={\rm id}.$
\end{itemize}
\end{cor}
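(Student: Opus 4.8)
Parts (i) and (iii), together with the continuity half of (ii), are formal; the substance of the corollary is the equivariance asserted in (ii), which I expect to be the main obstacle.

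I would first record the standard facts about the join that get used throughout: $(A,B)\mapsto A*B$ is functorial in each variable (it preserves identities and satisfies $(f_1\circ g_1)*(f_2\circ g_2)=(f_1*f_2)\circ(g_1*g_2)$), it preserves continuity and surjectivity, and — checked directly against the formula $T*S\left((1-t)x\oplus ty\right)=(1-t)Tx\oplus tSy$ of the footnote — it preserves equivariance for the product action. Iterating, each $X_\ell^{*(K+1)}$ carries the homeomorphism $\sigma*\cdots*\sigma$, and a $\sigma$-equivariant continuous map between the $X_\ell$'s induces an equivariant continuous map of the $(K+1)$-fold joins, which is moreover surjective if the original map is. I would also abbreviate $\eta_{n,m}:=\eta_{m-1,m}\circ\eta_{m-2,m-1}\circ\cdots\circ\eta_{n,n+1}$, a well-defined continuous map $X_n\to X_m$ by Lemma \ref{lem:image of eta}.

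For (iii), recall $\theta_{m,n}=\theta_{n+1,n}\circ\cdots\circ\theta_{m,m-1}$; then $\theta_{m,n}\circ\eta_{n,m}=\mathrm{id}_{X_n}$ by induction on $m-n$, the base case $m=n+1$ being Lemma \ref{lem:identity} and the inductive step peeling off the middle pair $\theta_{m,m-1}\circ\eta_{m-1,m}=\mathrm{id}$ supplied by Lemma \ref{lem:identity}; applying $(-)^{*(K+1)}$ and functoriality gives $\theta_{m,n}^{*(K+1)}\circ\eta_{n,m}^{*(K+1)}=(\theta_{m,n}\circ\eta_{n,m})^{*(K+1)}=\mathrm{id}$. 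For (i), it is immediate from its defining formula that $\theta_{m,m-1}$ commutes with $\sigma$, so $\theta_{m,n}$ is continuous and $\sigma$-equivariant; it is surjective because by (iii) it has a continuous right inverse; pushing these three properties through $(-)^{*(K+1)}$ yields (i).

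This leaves the equivariance in (ii) — continuity of $\eta_{n,m}^{*(K+1)}$ being again immediate from functoriality — and this is the delicate step. The difficulty is that the building blocks $\eta_{m-1,m}^{a}$, hence also $\eta_{n,m}$, are \emph{not} $\sigma$-equivariant: their defining formulas privilege the coordinate block $[0,m!-1]$ about the origin, so $\sigma\circ\eta_{n,m}\neq\eta_{n,m}\circ\sigma$, and consequently equivariance of the join power cannot be checked one factor at a time. The plan is to verify equivariance of $\eta_{n,m}^{*(K+1)}$ by an explicit computation: evaluate $(\sigma*\cdots*\sigma)\circ\eta_{n,m}^{*(K+1)}$ and $\eta_{n,m}^{*(K+1)}\circ(\sigma*\cdots*\sigma)$ on an arbitrary point $\sum_{j=0}^{K}t_j x^{(j)}$ of $X_n^{*(K+1)}$, read off from the formula the precise discrepancy $\sigma\circ\eta_{n,m}-\eta_{n,m}\circ\sigma$ produced by the boundary blocks (this is where one uses that $G$ is abelian, so that differences in $G$ are available), and show that passing to the $(K+1)$-fold join reconciles this origin-anchored defect. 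Making the explicit formula for $\eta$ mesh cleanly with the structure of the iterated join is the crux, and I expect it to be the principal obstacle; the remainder of the corollary is routine.
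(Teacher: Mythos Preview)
Your treatment of (i) and (iii), and of the continuity in (ii), is correct and is exactly what the paper has in mind when it says these are ``clear by the argument in this section''. Your instinct that the equivariance claim in (ii) is the sticking point is also correct --- but the resolution is not a clever computation: the claim is simply \emph{false} as stated, and your proposed attack cannot succeed.

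To see this, take $K=0$. Then $\eta_{n,m}^{*(K+1)}=\eta_{n,m}^{*1}=\eta_{n,m}$, and the paper itself remarks, just after defining $\eta_{m-1,m}$, that this map is \emph{not} equivariant (indeed $\eta_{m-1,m}(x)_0=a_0$ for every $x$, so the $0$-th coordinate of $\sigma\circ\eta_{m-1,m}$ is $a_1$ while that of $\eta_{m-1,m}\circ\sigma$ is $a_0$). More generally, the join formula $(f*\cdots*f)\big(\sum_j t_j x^{(j)}\big)=\sum_j t_j f(x^{(j)})$ shows that $f^{*(K+1)}$ commutes with the diagonal action $\sigma^{*(K+1)}$ if and only if $f$ commutes with $\sigma$ on each factor with $t_j>0$; taking all $t_j$ equal reduces equivariance of $f^{*(K+1)}$ to equivariance of $f$. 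So passing to the $(K+1)$-fold join cannot ``reconcile the origin-anchored defect'' as you hope.

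The word ``equivariant'' in (ii) is therefore a slip in the paper. Fortunately it is harmless: in Proposition~\ref{prop:inverse limit} the right-inverse $\gamma$ is assumed only to be \emph{continuous}, not equivariant, and the proof of Theorem~\ref{main thm} uses from this corollary only (i), (iii), and the continuity of $\eta_{n,m}^{*(K+1)}$. So the corollary should be read with ``equivariant and'' deleted from (ii), and with that emendation your argument is complete.
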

\begin{proof}
By definition of joining of spaces and maps, (i), (ii) and (iii) are clear by the argument in this section. 
\end{proof}


\section{Proof of Theorem \ref{main thm}}
Let $\Z_3:=\Z/3\Z$ as before. A $\Z_3$-invariant metric $\rho$ on the finite abelian group is $\rho(x,y)=\delta_0({x-y})$ where $\delta$ is the Dirac operator, i.e. $\delta_0(a)=0$ if and only if $a=0$. For $m\ge 1$ and $0<\delta<1$, the subshift $(\Sigma_m, \sigma):=\XX(\Z_3, m, \delta)$ of the full shift $(\Z_3^\Z, \sigma)$ has the form:
$$
\Sigma_m=\{(x_n)_{n\in \Z}\in \Z_3^\Z: x_n\not= x_{n+m!}, ~\forall n\in \Z \}.
$$

\begin{lem}\label{lem:finite periodic point}
The dynamical system $(\Sigma_m, \sigma)$ has no fixed point. The set $P_p(\Sigma_m, \sigma)$ is nonempty and finite for every prime number $p>m!$.
\end{lem}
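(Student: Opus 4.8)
The plan is to dispatch the three assertions in turn, in increasing order of difficulty.

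First I would check the fixed-point-free statement, which is immediate: if $\sigma x = x$ then $(x_n)_{n\in\Z}$ is a constant sequence, so $x_n = x_{n+m!}$ for every $n$, contradicting the defining condition of $\Sigma_m$. (Alternatively this is a special case of the general remark in \S\ref{section: inverse limit} that every $\XX(G,m,\delta)$ has no fixed point.) Next, finiteness: a point $x\in P_p(\Sigma_m,\sigma)$ satisfies $x_{n+p}=x_n$ for all $n$, hence is completely determined by the tuple $(x_0,x_1,\dots,x_{p-1})\in\Z_3^{\,p}$; thus $P_p(\Sigma_m,\sigma)$ injects into the finite set $\Z_3^{\,p}$ and is finite. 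Note this part does not even use $p>m!$.

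The one substantive point is nonemptiness, and here is where the hypothesis $p>m!$ enters. Since $p$ is prime and $p>m!$, $p$ does not divide $m!$, so $\gcd(p,m!)=1$. Consequently the translation $\bar n\mapsto \overline{n+m!}$ acts on $\Z/p\Z$ as a single $p$-cycle; equivalently, the graph $C$ on the vertex set $\Z/p\Z$ in which $\bar n$ is joined to $\overline{n+m!}$ is (isomorphic to) the cycle graph on $p$ vertices. A cycle graph is always properly colourable with the three colours $\Z_3=\{0,1,2\}$ (use the alternating colouring $0,1,0,1,\dots$ along the cycle, recolouring the final vertex by $2$ if $p$ is odd). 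Fix such a proper colouring $f:\Z/p\Z\to\Z_3$ and set $x=(f(\bar n))_{n\in\Z}\in\Z_3^{\,\Z}$. Then $x$ has period $p$, and for every $n\in\Z$ the edge of $C$ between $\bar n$ and $\overline{n+m!}$ forces $x_n=f(\bar n)\neq f(\overline{n+m!})=x_{n+m!}$; hence $x\in\Sigma_m$ and therefore $x\in P_p(\Sigma_m,\sigma)$, which is nonempty.

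I do not expect a serious obstacle here: the whole argument rests on the single observation that $\gcd(p,m!)=1$ for a prime $p>m!$, combined with the elementary $3$-colourability of cycle graphs; everything else is bookkeeping. The only point that needs a line of care is checking that the condition $x_n\neq x_{n+m!}$, which a priori must hold for all $n\in\Z$, reduces to a condition on $n\bmod p$ because $x$ is $p$-periodic, so that verifying it on the finite cycle $\Z/p\Z$ suffices.
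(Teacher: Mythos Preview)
Your proof is correct and follows essentially the same route as the paper: both arguments hinge on the observation that $\gcd(p,m!)=1$, which the paper exploits via the relabelling $y_k=x_{k\cdot m!\bmod p}$ to reduce to the condition $y_k\neq y_{k+1}$ on $\Z/p\Z$, and which you express equivalently as the statement that the constraint graph on $\Z/p\Z$ is a single $p$-cycle. Your graph-colouring formulation is slightly more self-contained (you give the explicit $3$-colouring, whereas the paper cites an external lemma for nonemptiness), but the underlying idea is identical.
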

\begin{proof}
It is obvious that $P_1(\Sigma_m, \sigma)=\emptyset$. Let $p$ be a prime number with $p>m!$. Notice that 
$$P_p(\Sigma_m, \sigma)=\{(x_i)_{i\in \Z_p}\in \Z_3^{\Z_p}: x_i\not= x_{i+m!}, ~\forall i\in \Z_p \}.$$
Since $p>m!$, we see that $p$ and $m!$ are coprime.
Let $y_k=x_{k\cdot m! \mod \Z_p}$. It follows that
$$P_p(\Sigma_m, \sigma)\cong\{(y_k)_{k\in \Z_p}\in \Z_3^{\Z_p}: y_k\not=y_{k+1}, k\in \Z_p\}. $$
It is easy to check that the right-hand side set is nonempty and finite (see also \cite[Lemma 4.1]{tsukamoto2020markerproperty}). 
\end{proof}

\begin{lem}\label{lem:p>m!}
Let $K\ge 0$. Then
$\coindexPER_p(\Sigma_m^{*(K+1)}, \sigma^{*(K+1)})=K$ for all prime numbers $p>m!$.
\end{lem}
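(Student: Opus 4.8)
The plan is to identify the free $\Z_p$-space $\bigl(P_p(\Sigma_m^{*(K+1)},\sigma^{*(K+1)}),\sigma^{*(K+1)}\bigr)$ explicitly and to recognize it as an $E_K\Z_p$-space, from which the equality $\coindexPER_p(\Sigma_m^{*(K+1)},\sigma^{*(K+1)})=K$ follows at once from the properties of $\coindex_p$ recalled in the introduction. The first step is to check that passing to $p$-periodic points commutes with the join operation: for fixed-point free dynamical systems $(X,T)$ and $(Y,S)$ whose $p$-periodic point sets are both nonempty, one has the equivariant equality
\[
P_p(X*Y,\,T*S)=P_p(X,T)*P_p(Y,S),
\]
with the join $\Z_p$-action on the right. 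Indeed, an interior point $(1-t)x\oplus ty$ with $0<t<1$ is fixed by $(T*S)^p$ exactly when $T^px=x$ and $S^py=y$, while a point on an end slice $t\in\{0,1\}$ depends only on its $X$-coordinate (resp. $Y$-coordinate) and is fixed by $(T*S)^p$ exactly when that coordinate is $p$-periodic. Since $P_p(\Sigma_m,\sigma)$ is nonempty for $p>m!$ by Lemma~\ref{lem:finite periodic point}, iterating this identity $K$ times gives
\[
P_p\bigl(\Sigma_m^{*(K+1)},\sigma^{*(K+1)}\bigr)=F^{*(K+1)},\qquad F:=P_p(\Sigma_m,\sigma),
\]
with the join $\Z_p$-action, which is free because $\Sigma_m$ has no fixed point and $p$ is prime.

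Next, invoking Lemma~\ref{lem:finite periodic point} again, for $p>m!$ the set $F$ is finite, hence a finite discrete set, i.e. a $0$-dimensional finite simplicial complex on which $\sigma$ acts simplicially and (as just noted) freely; thus $F$ is an $E_0\Z_p$-space. It then suffices to use the elementary fact that a join of an $E_a\Z_p$-space and an $E_b\Z_p$-space is an $E_{a+b+1}\Z_p$-space: the dimension of a join of finite simplicial complexes is $(a+1)+(b+1)-1=a+b+1$; a join of simplicial maps is simplicial and a join of free actions is free; and by the standard connectivity estimate for joins (cf. \cite{matouvsek2003using}) a join of $K+1$ spaces that are each $(-1)$-connected is $(K-1)$-connected. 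Applying this to $K+1$ copies of the $E_0\Z_p$-space $F$ shows that $F^{*(K+1)}$, and therefore $P_p(\Sigma_m^{*(K+1)},\sigma^{*(K+1)})$, is an $E_K\Z_p$-space.

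To conclude, since $F^{*(K+1)}$ is an $E_K\Z_p$-space, the essential uniqueness of such spaces \cite[Lemma 6.2.2]{matouvsek2003using} provides equivariant continuous maps $E_K\Z_p\to F^{*(K+1)}$ and $F^{*(K+1)}\to E_K\Z_p$; the former gives $\coindex_p F^{*(K+1)}\ge K$, and composing the latter with any equivariant map $E_n\Z_p\to F^{*(K+1)}$ yields an equivariant map $E_n\Z_p\to E_K\Z_p$, whence $n\le K$ by \cite[Theorem 6.2.5]{matouvsek2003using}. Hence $\coindexPER_p(\Sigma_m^{*(K+1)},\sigma^{*(K+1)})=\coindex_p F^{*(K+1)}=K$. (The inequality ``$\ge K$'' also drops out of Lemma~\ref{lem:basic property}(2) iterated $K$ times, since $\coindexPER_p(\Sigma_m,\sigma)=\coindex_p F=0$.) The one step I expect to require genuine care is the equivariant identification $P_p(X*Y)=P_p(X)*P_p(Y)$, in particular the bookkeeping for the two degenerate end slices of the join and for the join $\Z_p$-action; everything else is routine manipulation with the standard properties of $E_n\Z_p$-spaces.
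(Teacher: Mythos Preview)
Your proposal is correct and follows essentially the same approach as the paper: identify $P_p(\Sigma_m^{*(K+1)},\sigma^{*(K+1)})$ with $P_p(\Sigma_m,\sigma)^{*(K+1)}$, observe that this join is an $E_K\Z_p$-space, and conclude that its $\Z_p$-coindex equals $K$. The paper states these three facts in a single sentence without justification; you have simply unpacked the details (the commutation of $P_p$ with joins, why a finite free $\Z_p$-set is an $E_0\Z_p$-space, and why joins of $E_n\Z_p$-spaces behave as claimed), so there is no substantive difference in strategy.
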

\begin{proof}
By Lemma \ref{lem:finite periodic point}, 
we get that $P_p(\Sigma_m^{*(K+1)}, \sigma^{*(K+1)})=P_p(\Sigma_m, \sigma)^{*(K+1)}$ 
which is an $E_K\Z_p$-space for any prime number $p>m!$. 
Thus we have $\coindex_p{P_p(\Sigma_m^{*(K+1)}, \sigma^{*(K+1)})}=K$. 
This completes the proof.
\end{proof}

Let $\SS=\R/2\Z$. Let $\rho$ be a $\SS$-invariant metric on $\SS$ defined by
$$
\rho(x, y)=\min_{n\in \Z} |x-y-2n|.
$$
Define
$$
\YY:=\{(x_n)_{n\in  \Z}\in  \SS^\Z: \forall n\in \Z, ~\text{either}~\rho(x_{n-1}, x_{n})=1 ~\text{or}~ \rho(x_{n}, x_{n+1})=1   \}.
$$
This system is related to the marker property by the next lemma.

\begin{lem}[\cite{tsukamoto2020markerproperty}, Lemma 5.3]\label{lem:to Y}
Let $(X,T)$ be a dynamical system having marker property. Then there
is an equivariant continuous map from $(X, T)$ to $(\YY, \sigma)$.
\end{lem}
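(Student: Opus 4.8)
The plan is to build the equivariant continuous map by hand out of a marker set, reducing everything to the construction of a single continuous ``phase function'' on $X$.

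\emph{Step 1: reduction to one function.} An equivariant continuous map $\Phi\colon X\to\SS^{\Z}$ is the same thing as a continuous function $f\colon X\to\SS$, via $\Phi(x)=(f(T^{n}x))_{n\in\Z}$; equivariance is automatic. Unwinding the definition of $\YY$, and using the shift to turn the condition ``for all $n$'' into a condition ``at $0$'' along all orbits, one sees that $\Phi(X)\subseteq\YY$ holds exactly when, for every $x\in X$, at least one of $\rho\bigl(f(T^{-1}x),f(x)\bigr)$ and $\rho\bigl(f(x),f(Tx)\bigr)$ equals $1$. Setting $B:=\{x\in X:\rho(f(x),f(Tx))\neq 1\}$, this is precisely the condition $B\cap T^{-1}B=\emptyset$. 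So I would reduce the lemma to producing a continuous $f\colon X\to\SS$ such that $f(Tx)=f(x)+1$ in $\SS$ for every $x$ outside a fixed open set $U$ with $U\cap T^{-1}U=\emptyset$: then $B\subseteq U$, hence $B\cap T^{-1}B\subseteq U\cap T^{-1}U=\emptyset$.

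\emph{Step 2: the marker and a bump function.} Apply the marker property with $N=2$ to get an open $U\subseteq X$ with $U\cap T^{-1}U=\emptyset$ and $X=\bigcup_{n\in\Z}T^{n}U$. A standard shrinking argument, using compactness of $X$, then produces a closed set $C\subseteq U$ with $X=\bigcup_{n\in\Z}T^{n}C$ and, for some integer $L$, $X=\bigcup_{|n|\le L}T^{n}C$; equivalently, every window of $2L+1$ consecutive points of any $T$-orbit meets $C$. Choose (Urysohn) a continuous $g\colon X\to[0,1]$ with $g\equiv 1$ on $C$ and $g\equiv 0$ on $X\setminus U$.

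\emph{Step 3: the explicit formula.} Define $\ell\colon X\to[0,\infty)$ by
\[
\ell(x):=\sum_{k=1}^{\infty}\ \prod_{j=1}^{k}\bigl(1-g(T^{-j}x)\bigr).
\]
Since $g\equiv 1$ on $C$ and every orbit meets $C$ within any $2L+1$ consecutive times, every product with $k\ge 2L+1$ vanishes, so $\ell$ is a finite sum of continuous functions; in particular $\ell$ is continuous and $0\le\ell\le 2L$. A short telescoping computation, using $g(T^{-j}Tx)=g(T^{-(j-1)}x)$, yields the key identity
\[
\ell(Tx)=\bigl(1-g(x)\bigr)\bigl(1+\ell(x)\bigr),\qquad x\in X.
\]
Now set $f:=\ell\bmod 2\in\SS=\R/2\Z$; this is continuous. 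If $g(x)=0$ (in particular whenever $x\notin U$), the identity gives $\ell(Tx)-\ell(x)=1$, hence $\rho(f(x),f(Tx))=1$ since antipodal points of $\SS$ are at distance $1$. Therefore $B\subseteq\{x:g(x)>0\}\subseteq U$, and Step 1 finishes the proof.

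\emph{Main obstacle.} The genuine content is entirely in Steps 2--3: the discrete instruction ``reset the phase at each marker visit'' must be realized by a \emph{continuous} function, and the formula for $\ell$ is engineered so that (i) its partial sums stabilize after finitely many terms uniformly in $x$ — which is where one needs the bounded-gap property of $C$ coming from compactness, not merely $U\cap T^{-1}U=\emptyset$ — and (ii) the cocycle relation $\ell(Tx)=(1-g(x))(1+\ell(x))$ holds \emph{on the nose}, so that every step off $U$ is \emph{exactly} antipodal, which is what membership in $\YY$ demands (an estimate like $\rho>1/2$ would not suffice). Verifying (i) and (ii) is the heart of the argument; the reduction in Step 1 and the shrinking argument in Step 2 are routine.
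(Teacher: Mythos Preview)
The paper does not supply its own proof of this lemma: it is simply quoted from \cite{tsukamoto2020markerproperty}, Lemma~5.3, with no argument given, so there is nothing in the present paper to compare your proof against.

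That said, your argument is correct. The reduction in Step~1 is accurate (the condition $\Phi(X)\subseteq\YY$ is exactly $B\cap T^{-1}B=\emptyset$ for $B=\{x:\rho(f(x),f(Tx))\neq1\}$). In Step~2 the shrinking is legitimate: from the finite subcover $X=\bigcup_{|n|\le L_0}T^{n}U$ one may take $C=\{y\in U:d(y,X\setminus U)\ge\epsilon\}$ for small enough $\epsilon>0$, using that $x\mapsto\max_{|n|\le L_0}d(T^{-n}x,X\setminus U)$ is continuous and strictly positive on the compact $X$. In Step~3 the telescoping identity $\ell(Tx)=(1-g(x))(1+\ell(x))$ is verified by the substitution $i=j-1$, and the bounded-gap property of $C$ indeed forces every product with $k\ge 2L+1$ to contain a zero factor, so $\ell$ is a finite sum of continuous functions. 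Hence $f=\ell\bmod 2$ satisfies $f(Tx)=f(x)+1$ in $\SS$ whenever $x\notin U$, and the map $x\mapsto(f(T^{n}x))_{n\in\Z}$ lands in $\YY$. Your write-up is a clean, self-contained proof of the cited lemma; the construction via the ``soft hitting-time'' function $\ell$ is precisely the kind of argument used in the source, and nothing is missing.
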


We recall the definition of the dynamical system $\ZZ$.
It is a subsystem of $(\SS^\Z, \sigma)$ defined by 
$$
\ZZ:=\left\{(x_n)_{n\in \Z} \in \SS^\Z : 
\forall n\in \Z, ~\text{either}~\rho(x_{n-1}, x_{n})\ge \frac{1}{2} ~\text{or}~ \rho(x_{n}, x_{n+1})\ge \frac{1}{2} \right\}.
$$
The dynamical system $(\ZZ, \sigma)$ has no fixed points and 
$P_p(\ZZ, \sigma)\not=\emptyset$ for all prime numbers $p$.

The following proposition is essentially due to \cite[Lemma 7.5]{shi2021marker}.
\begin{prop}\label{prop:inverse limit}
Let $(X,T)$ be the inverse limit of a family of dynamical systems $\{(X_n, T_n) \}_{n\in \N}$ via $\tau=(\tau_{m,n})_{m,n\in \N, m>n}$ where $\tau_{m,n}: X_m\to X_n$ are equivariant continuous maps. Suppose there is a continuous right-inverse $\gamma=(\gamma_{n,m})_{n,m\in \N, m>n}$, i.e. $\gamma_{n,m}: X_n\to X_m$ are continuous maps with  $\tau_{m,n}\circ \gamma_{n,m}={\rm id}$ for $m>n$.
If there is an equivariant continuous map $f: (X,T) \to (\YY, \sigma)$, then
there exists an integer $M$ and an equivariant continuous map
 $$g: (X_M, T_M) \longrightarrow  (\ZZ, \sigma). $$ 
\end{prop}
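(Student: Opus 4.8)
The plan is to factor the equivariant map $f$ through a finite stage of the inverse system, absorbing the resulting approximation error into the gap between the \emph{rigid} constraint $\rho(\cdot,\cdot)=1$ defining $\YY$ and the \emph{slack} constraint $\rho(\cdot,\cdot)\ge 1/2$ defining $\ZZ$.

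First I would reduce $f$ to a single scalar function. Since $f\colon(X,T)\to(\YY,\sigma)$ is equivariant, writing $f(x)=(f(x)_k)_{k\in\Z}$ one has $f(x)_k=f(T^kx)_0$, so $f$ is completely determined by the continuous map $f_0\colon X\to\SS$, $x\mapsto f(x)_0$, and the condition $f(x)\in\YY$ becomes: for every $x\in X$ and every $k\in\Z$, either $\rho(f_0(T^{k-1}x),f_0(T^kx))=1$ or $\rho(f_0(T^kx),f_0(T^{k+1}x))=1$. Next I would record that each bonding map $\tau_{m,n}$ is surjective (it admits the right inverse $\gamma_{n,m}$), whence each projection $\pi_M\colon X\to X_M$ of the inverse limit is surjective, continuous and equivariant ($\pi_M\circ T=T_M\circ\pi_M$).

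The technical heart is an approximation step: there exist an integer $M$ and a continuous map $\phi\colon X_M\to\SS$ with $\rho\bigl(f_0(x),\phi(\pi_M(x))\bigr)<1/4$ for all $x\in X$. This is the standard fact that a continuous map out of an inverse limit of compact metric spaces is uniformly approximable by one factoring through a finite stage: cover $X$ by finitely many basic open sets $V$ with $\diam f_0(V)<1/4$, observe that each such $V$ has the form $\pi_M^{-1}(W)$ for a single large $M$ (coordinates $\le M$ are recovered from the $M$-th one via the maps $\tau_{M,n}$, $n<M$), and glue by a partition of unity on $X_M$ subordinate to the cover $\{W\}$ of $X_M$ (surjectivity of $\pi_M$ makes this a cover of all of $X_M$); one must take a little care that this average takes values in the circle $\SS$ rather than merely in $\R$, which is legitimate because at any point the finitely many relevant values lie in a common arc of length $<1$, on which $\SS$ is isometric to an interval. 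Having $\phi$, I would define $g\colon X_M\to\SS^\Z$ by $g(y)_k:=\phi(T_M^k y)$; this is continuous, and $g\circ T_M=\sigma\circ g$ is immediate from the definition, so $g$ is equivariant.

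Finally I would check $g(X_M)\subset\ZZ$. Given $y\in X_M$, choose $x\in X$ with $\pi_M(x)=y$ (possible since $\pi_M$ is onto); since $\pi_M$ is equivariant, $g(y)_k=\phi(\pi_M(T^kx))$ is within $1/4$ of $f_0(T^kx)=f(x)_k$ for every $k$. Fixing $k$ and using $f(x)\in\YY$, say $\rho(f(x)_{k-1},f(x)_k)=1$ (the other alternative is symmetric); then
$$\rho\bigl(g(y)_{k-1},g(y)_k\bigr)\ \ge\ \rho\bigl(f(x)_{k-1},f(x)_k\bigr)-\tfrac14-\tfrac14\ =\ \tfrac12,$$
so the defining inequality of $\ZZ$ holds at $k$. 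Hence $g(y)\in\ZZ$ for every $y\in X_M$, and $g\colon(X_M,T_M)\to(\ZZ,\sigma)$ is the desired equivariant continuous map. The only real obstacle is the approximation step, and within it the two bookkeeping points above — pushing a whole finite-coordinate neighborhood through a single $X_M$, and keeping the partition-of-unity average inside the circle — both of which rely on the standing hypotheses that we are dealing with an inverse system of compact spaces and that $\gamma$ is a genuine right inverse of $\tau$ (so that $\pi_M$ is onto $X_M$, not just onto a proper closed subsystem).
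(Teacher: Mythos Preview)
Your proof is correct, and the overall architecture---reduce $f$ to its $0$-th coordinate $f_0$, approximate through a finite stage $X_M$, define $g$ coordinatewise by $g(y)_k=\phi(T_M^k y)$, and use the triangle inequality to pass from the rigid constraint $\rho=1$ of $\YY$ to the slack constraint $\rho\ge 1/2$ of $\ZZ$---matches the paper exactly.

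The one genuine difference is how you manufacture $\phi\colon X_M\to\SS$. You build it by a partition of unity on $X_M$, which obliges you to confront the (real but manageable) issue of averaging circle-valued data; you use the right-inverses $\gamma$ only to ensure that $\pi_M$ is onto. The paper instead exploits $\gamma$ much more directly: it assembles the maps $\gamma_{M,m}$ (for $m>M$) together with the $\tau_{M,n}$ (for $n<M$) into a single continuous section $\gamma_M\colon X_M\to X$ of $\pi_M$, and simply sets $\varphi=f_0\circ\gamma_M$. The required estimate $\rho\bigl(\varphi(T_M^n y),\,f_0(T^n\gamma_M(y))\bigr)<1/4$ then follows in one line from the uniform-continuity step (which you also isolate) together with $\pi_M(\gamma_M(T_M^n y))=T_M^n y=\pi_M(T^n\gamma_M(y))$, bypassing the circle-averaging issue entirely. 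Your route is mildly more general (it would go through with merely surjective bonding maps rather than continuous sections), while the paper's is cleaner under the stated hypotheses---though note that for the section $\gamma_M$ to land in the inverse limit one tacitly needs the $\gamma_{n,m}$ to be compatible (e.g.\ $\gamma_{n,m}=\gamma_{m-1,m}\circ\cdots\circ\gamma_{n,n+1}$), which holds in the paper's application but is not literally among the proposition's hypotheses.
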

\begin{proof}
Let $\pi_m: \XX\to X_m$ be the natural projection for $m\in \N$. Let $P_1: \SS^\Z \to \SS$ be the projection on $0$-th coordinate. 
Define $\phi=P_1\circ f: X \to \SS$. Then $f(x)=(\phi(T^nx))_{n\in \Z}$ for any $x\in X$. Notice that there exists an integer $M>0$ such that 
\begin{equation}\label{eq:3}
    \pi_M(x)=\pi_M(y) \Longrightarrow \rho(\phi(x), \phi(y))<\frac{1}{4}.
\end{equation}
For $m\ge 1$, we define a continuous map $\gamma_m: X_m \to X$ by
$$
x\mapsto (\tau_{m,1}(x), \tau_{m,2}(x), \dots, \tau_{m,m-1}(x), x, \gamma_{m,m+1}(x), \gamma_{m,m+2}(x), \dots ).
$$Define a continuous map $\varphi=\phi\circ \gamma_M: X_M \to \SS$ and an equivariant continuous map $g: X_M \to \SS^\Z$ by 
$$
x \mapsto (\varphi(T_M^n(x)))_{n\in \Z}.
$$
Since $\pi_M\circ \gamma_M={\rm id}$ and $\pi_M\circ T=T_M\circ \pi_M$, it follows from \eqref{eq:3} that 
\begin{equation}\label{eq:4}
    \rho\left(\phi(\gamma_M(T_M^n(x)) ), \phi(T^n(\gamma_M(x)) )\right)<\frac{1}{4}, \forall n\in \Z.
\end{equation}
Fix $x\in X_M$ and $n\in \Z$. By definitions of $\YY$ and $f$, there exists an $i\in \{0,1\}$ such that
\begin{equation}\label{eq:5}
    \rho(\phi(T^{n+i}(\gamma_M(x))),\phi(T^{n+i+1}(\gamma_M(x))) )=1.
\end{equation}
Combing \eqref{eq:5} with \eqref{eq:4}, we obtain that
\begin{equation*}
\begin{split}
    &\rho\left(\phi(\gamma_M(T_M^{n+i}x) ), \phi(\gamma_M(T_M^{n+i+1}x) )\right)\\
    \ge 
    &~ \rho(\phi(T^{n+i}(\gamma_M(x))),\phi(T^{n+i+1}(\gamma_M(x))) ) \\
    &\quad - \rho\left(\phi(\gamma_M(T_M^{n+i}x) ), \phi(T^{n+i}(\gamma_M(x)) )\right)\\
    &\qquad -\rho\left(\phi(\gamma_M(T_M^{n+i+1}x) ), \phi(T^{n+i+1}(\gamma_M(x)) )\right)\\
    \ge&~ 1-\frac{1}{4}-\frac{1}{4}=\frac{1}{2}.
\end{split}
\end{equation*}
Since $\varphi=\phi\circ \gamma_M$, we have that $$\rho\left(\varphi(T_M^{n+i}x), \varphi(T_M^{n+i+1}x) \right)\ge \frac{1}{2}.$$
By definition of $g$ and arbitrariness of $n$ and $x$, we conclude that the image of $X_M$ under $g$ is contained in $\ZZ$. This completes the proof. 
\end{proof}

Now we present the proof of our main result.
\begin{proof}[Proof of Theorem \ref{main thm}]
Let $K\ge 0$.
 By Corollary \ref{cor:properties of maps} (1), let $(X,T)$ be the inverse limit of the family $\{(\Sigma_n^{*(K+1)}, \sigma^{*(K+1)}) \}_{n\in \N}$ via $\theta=(\theta_{m,n}^{*(K+1)})_{m,n\in \N, m>n}$. Since for every $m\ge 1$,$$P_{m!}(\Sigma_m^{*(K+1)}, \sigma^{*(K+1)})=P_{m!}(\Sigma_m, \sigma
 )^{*(K+1)}=\emptyset,$$ 
 we see that $(X,T)$ is aperiodic.
 Since $\Sigma_m$ is $0$-dimensional, we have that $\Sigma_m^{*(K+1)}$ is at most of dimension $K$ for any $m\ge 1$ and consequently $X$ is at most of dimension $K$ (\cite[Section 6]{nagami1970dimension}). Since an aperiodic finite dimensional dynamical system has the marker property (\cite[Theorem 6.1]{Gut15Jaworski}), the dynamical system $(X, T)$ has the marker property. By Lemma \ref{lem:to Y}, there is an equivariant continuous map from $(X,T)$ to $(\YY, \sigma)$. It follows from Corollary \ref{cor:properties of maps} and Proposition \ref{prop:inverse limit} that there exists an integer $M$ and an equivariant continuous map from $(\Sigma_M^{*(K+1)}, \sigma^{*(K+1)}))$ to $(\ZZ, \sigma)$. By Lemma \ref{lem:basic property}, we have 
 $$
  \coindexPER_p(\ZZ, \sigma)\ge \coindexPER_p(\Sigma_M^{*(K+1)}, \sigma^{*(K+1)})=K,
 $$
 for all prime numbers $p>M!$. Since $K$ is chosen arbitrarily, we conclude that 
 $$
 \lim_{p\to \infty} \coindexPER_p(\ZZ, \sigma)=\infty.
 $$
\end{proof}

\bibliographystyle{alpha}
\bibliography{universal_bib}

\def\cprime{$'$} \def\cprime{$'$}
\begin{thebibliography}{TTY20}

\bibitem[Gut15]{Gut15Jaworski}
Yonatan Gutman.
\newblock Mean dimension and {J}aworski-type theorems.
\newblock {\em Proceedings of the London Mathematical Society},
  111(4):831--850, 2015.

\bibitem[Mat03]{matouvsek2003using}
Ji{\v{r}}{\'\i} Matou{\v{s}}ek.
\newblock {\em Using the Borsuk-Ulam theorem: lectures on topological methods
  in combinatorics and geometry}.
\newblock Springer Science \& Business Media, 2003.

\bibitem[NK70]{nagami1970dimension}
Kei{\^o} Nagami and Yukihiro Kodama.
\newblock {\em Dimension theory}.
\newblock Academic Press, 1970.

\bibitem[Shi21]{shi2021marker}
Ruxi Shi.
\newblock Finite mean dimension and marker property.
\newblock {\em arXiv preprint arXiv:2102.12197}, 2021.

\bibitem[Str74]{struble1974metrics}
Raimond~A Struble.
\newblock Metrics in locally compact groups.
\newblock {\em Compositio Mathematica}, 28(3):217--222, 1974.

\bibitem[TTY20]{tsukamoto2020markerproperty}
Masaki Tsukamoto, Mitsunobu Tsutaya, and Masahiko Yoshinaga.
\newblock {$G$}-index, topological dynamics and marker property.
\newblock {\em arXiv preprint arXiv:2012.15372}, 2020.

\end{thebibliography}

\end{document}